\newtheorem{lemma}{Lemma}[section]
\newtheorem{proposition}[lemma]{Proposition}
\theoremstyle{definition}
\newtheorem{definition}[lemma]{Definition}
\theoremstyle{remark}
\newtheorem{remark}[lemma]{Remark}
\def\mod{\operatorname{mod}}
\begin{document}

\title{Euler pseudoprimes for half of the bases}
\thanks{{\it 2010 Math classification:} 11A15, 11A51, 11Y11}
\thanks{\emph{Key words:} Euler pseudoprimes, liars, Carmichael numbers}
\thanks{\today}

\author{Lorenzo Di Biagio}
\address{Dipartimento di Matematica, Universit\`a degli Studi ``Roma Tre''- Largo S.\ Leonardo Murialdo 1, 00146, Roma, Italy}
\email[1]{dibiagio@mat.uniroma3.it}
\email[2]{lorenzo.dibiagio@gmail.com}

\begin{abstract}
We prove that an odd number $n$ is an Euler pseudoprime for exactly one half of the admissible bases if and only if $n$ is a special Carmichael number, that is, $a^{\frac{n-1}{2}} \equiv 1 \mod n$ for every invertible $a \in \mathbb{Z}_n$.
\end{abstract}
\dedicatory{Dedicated to the memory of Prof.~ John Lewis Selfridge}

\maketitle


\setcounter{section}{0}
\setcounter{lemma}{0} 

\section{Introduction}

Given a large odd number $n$ without small factors, one can try to decide whether $n$ is prime by randomly taking some $a$ coprime with $n$ and computing $a^{n-1} \mod n$. If this value is not $1$, then $n$ is certainly not prime, by Fermat's little theorem. Otherwise we can only say that $n$ is probably prime. Actually either $n$ is prime or $n$ is \emph{pseudoprime} for the base $a$; the latter is equivalent to saying that $a$ is a liar to Fermat's primality test.

Even if Fermat's primality test is often correct, unfortunately it cannot be trustingly used as a Monte-Carlo primality test because there exist odd composite numbers that are pseudoprimes for all of the bases coprime with $n$. These numbers are called \emph{Carmichael numbers}: they are much rarer than primes but they are still infinite, as proved by Alford, Granville and Pomerance in \cite{AGP}.

Instead of considering Fermat's little theorem one could use Euler's criterion: namely Euler proved that if $p$ is an odd prime, then $a^{\frac{p-1}{2}} \equiv ( \frac{a}{p} ) \mod p$ for every $a$, where $(\frac{a}{p} )$ is the Legendre--Jacobi symbol. Thus the primality of a large odd number $n$ can be tested by checking $a^{\frac{n-1}{2}} \equiv (\frac{a}{n})  \mod n$ for some $a$ coprime with $n$. If this relation is not satisfied then $n$ is certainly not prime; otherwise $n$ is probably prime and, as before, we have that either $n$ is prime or $n$ is an \emph{Euler pseudoprime} for the base $a$; the latter is equivalent to saying that $a$ is a liar to the Solovay--Strassen primality test.

In order to confidently use this primality test in a Monte-Carlo method, it was very important to establish for how many bases an odd composite number can be an Euler pseudoprime. It has been reported to the author by Pomerance that Selfridge was probably the first one to realize that for every odd composite number $n$ there is at least one $x$ for which $n$ is not an Euler pseudoprime for the base $x$, but he did not publish his discovery (see \cite[\S 5]{EP}, where Selfridge is credited). Anyway, a few years after Selfridge's discovery, both Lehmer (see \cite{L}) and Solovay--Strassen (see \cite{SS}), independently, proved the same result. In particular, Solovay and Strassen also noticed that the subset of bases in $U(\mathbb{Z}_n):=\{a \in \mathbb{Z}_n \mid \mathrm{gcd}(a,n)=1\}$ for which $n$ is an Euler pseudoprime is actually a subgroup. As an easy consequence of this fact, they showed that no odd composite number can be an Euler pseudoprime for more than half of the admissible bases (that is, elements of the group $U(\mathbb{Z}_n)$): 
\begin{equation} \label{disbase}
\left | \left\{a \in U(\mathbb{Z}_n) \mid a^{\frac{n-1}{2}} \equiv \left( \frac{a}{n} \right) \mod n\right\} \right |\leq \phi(n)/2.
\end{equation} 
This remark paved the way for an efficient probabilistic primality test, named Solovay--Strassen after the two authors.

During the subsequent years many papers about pseudoprimes, Euler pseudoprimes, strong pseudoprimes appeared: for example, an article by Pomerance, Selfridge and Wagstaff (\cite{PSW}), where many properties are stated and many examples are given and an article by Monier (\cite{M}), where a formula to count the number of liars is given. 

The purpose of this note is just to understand in which cases the bound in (\ref{disbase}) is actually achieved.

We will prove the following 
in an equivalent form as Proposition \ref{caratterizzazione}:
\begin{proposition}
Let $n$ be an odd composite number. Then $n$ is an Euler pseudoprime for exactly one half of the bases in $U(\mathbb{Z}_n)$ if and only if $a^{\frac{n-1}{2}} \equiv 1 \mod n$ for every $a \in U(\mathbb{Z}_n)$.
\end{proposition}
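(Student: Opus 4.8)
The plan is to recast the Euler condition as the kernel of a single homomorphism and then, in the nontrivial direction, reduce everything to a rank computation over $\F_2$. Throughout write $G=U(\Z_n)$ and, for $a\in G$, set $\psi(a)=a^{\frac{n-1}{2}}$ and $\chi(a)=\left(\frac{a}{n}\right)$, the Jacobi symbol; both are homomorphisms $G\to G$, the image of $\chi$ being $\{\pm1\}$, and $\chi$ is onto $\{\pm1\}$ because for any prime $p\mid n$ the CRT produces a unit that is a nonresidue mod $p$ and $1$ modulo the remaining prime-power factors. Since $\chi(a)=\pm1$ is its own inverse, the pointwise product $\theta:=\psi\cdot\chi$ is again a homomorphism and the set of Euler liars is exactly $E=\Ker\theta$; hence $[G:E]=|\Imm\theta|$, so the claim becomes $|\Imm\theta|=2\iff\psi\equiv1$. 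One direction is immediate: if $\psi\equiv1$ then $\theta=\chi$, so $\Imm\theta=\{\pm1\}$ and $[G:E]=2$.

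For the converse I assume $|\Imm\theta|=2$. Then $\Imm\theta=\{1,t\}$ with $t^2=1$, so $\theta(a)^2=1$ for every $a$; but $\theta(a)^2=a^{n-1}\chi(a)^2=a^{n-1}$, whence $a^{n-1}\equiv1$ for all $a\in G$ and $n$ is a Carmichael number. In particular $n=p_1\cdots p_k$ is squarefree with $k\ge3$, each $C_i:=U(\Z_{p_i})$ is cyclic of even order $m_i=p_i-1$, and $G\cong\prod_iC_i$ by the CRT. Because $m_i\mid n-1$, every value $\psi(a)$ lies in the $2$-torsion $G[2]\cong\F_2^{\,k}$, so $\psi$, $\chi$ and $\theta$ all take values in this $\F_2$-vector space and I may argue linearly.

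The key step is to write the three maps in coordinates. Let $\lambda_i\colon G\to\F_2$ be the quadratic (Legendre) character modulo $p_i$; the $\lambda_i$ are linearly independent, so $a\mapsto(\lambda_1(a),\dots,\lambda_k(a))$ surjects onto $\F_2^{\,k}$. Putting $\epsilon_i:=\frac{n-1}{m_i}\bmod2$, a direct computation gives, in additive $\F_2^{\,k}$ notation, $\psi(a)=(\epsilon_1\lambda_1(a),\dots,\epsilon_k\lambda_k(a))$ while $\chi(a)=\big(\sum_i\lambda_i(a)\big)\,\mathbf{1}$, the Jacobi symbol embedding diagonally. Thus $\theta$ is the $\F_2$-linear map with matrix $D+J$, where $D=\mathrm{diag}(\epsilon_1,\dots,\epsilon_k)$ and $J$ is the all-ones matrix, and $|\Imm\theta|=2^{\mathrm{rank}(D+J)}$. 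It then remains to show $\mathrm{rank}(D+J)=1\iff D=0$: if $D=0$ the matrix is $J$, of rank $1$; conversely, if some $\epsilon_i=1$ then the $i$-th row is $\mathbf{1}-e_i$, and comparing it with any other row (either $\mathbf{1}$, or $\mathbf{1}-e_j$ for $j\neq i$) exhibits two distinct nonzero rows, forcing $\mathrm{rank}\ge2$ as soon as $k\ge2$. Since $k\ge3$, this yields $D=0$, i.e.\ $\psi\equiv1$, and $n$ is a special Carmichael number, completing the proof.

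The main obstacle I anticipate is the bookkeeping in the coordinate step: getting right the asymmetry between $\psi$, which acts componentwise through the individual $\lambda_i$, and $\chi$, which contributes the single global sign $\sum_i\lambda_i$ and hence sits diagonally. This asymmetry is exactly what produces the matrix $D+J$ rather than a diagonal matrix, and the entire converse hinges on computing its rank correctly.
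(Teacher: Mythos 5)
Your proof is correct in substance, and it takes a genuinely different route from the paper's. The paper obtains Carmichael-ness from a case analysis on the index of the subgroup $\{a : a^{\frac{n-1}{2}} \equiv 1 \bmod n\}$ (Lemma \ref{carmichael}, which rests on Lemma \ref{nomenouno}), and then rules out the remaining bad case by citing \cite[Exercise 3.24]{CP} together with Remark \ref{troppi} and an explicit CRT construction of a unit $x$ with $x^{\frac{n-1}{2}} \not\equiv 1 \bmod n$ but $\left(\frac{x}{n}\right) = 1$. You instead get Carmichael-ness in one line --- if $|\Imm\theta| = 2$ then every $\theta(a)$ squares to $1$, i.e.\ $a^{n-1} \equiv 1 \bmod n$ --- bypassing both preliminary lemmas, and you replace the counting citation and the ad hoc CRT element by a rank computation for the matrix $D+J$ over $\F_2$. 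Your matrix argument proves, rather than cites, the dichotomy the paper imports from \cite{CP}, and it isolates the structural reason for the bound: a single odd quotient $(n-1)/(p_i-1)$ already produces two distinct nonzero rows, hence $\mathrm{rank}(D+J) \geq 2$ and at most $\phi(n)/4$ liars. The trade-off is that the paper's argument is more elementary, while yours is more self-contained and closer in spirit to the Monier-formula approach mentioned in Remark \ref{monier}.

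One claim does need repair. You assert at the outset that $\chi$ is onto $\{\pm 1\}$ for every odd composite $n$, justifying this with a unit that is a nonresidue modulo one prime $p \mid n$ and $1$ modulo the remaining prime-power factors. That unit has Jacobi symbol $(-1)^{\alpha_p}$, where $\alpha_p$ is the exponent of $p$ in $n$, so the argument needs some $\alpha_p$ to be odd; the claim itself is false when $n$ is a perfect square (e.g.\ $\left(\frac{a}{9}\right) = 1$ for all $a$ coprime to $9$), which is exactly why the paper's Lemma \ref{jacobi} assumes $n$ is not a perfect square. Fortunately you use surjectivity of $\chi$ only in the forward direction, where it is easily restored: $\psi \equiv 1$ forces $a^{n-1} \equiv 1$ for all units, so $n$ is Carmichael, hence square-free, and then your CRT construction is valid. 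Insert that observation before concluding $\Imm\theta = \{\pm 1\}$, and the proof is complete.
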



\subsection*{Acknowldegments}
The author wishes to warmly thank Prof.~ C.~ Pomerance for his kindness and for many helpful discussions.





\section{Preliminary lemmas}

\begin{lemma} \label{nomenouno} Let $n>2$ be an odd number. Then, for any $a \in  \mathbb{Z}$, $a^{n-1} \not \equiv -1 \mod n$.
\end{lemma}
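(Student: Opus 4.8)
The plan is to argue by contradiction. Suppose $a^{n-1} \equiv -1 \pmod n$ for some $a$. First I would dispose of the non-coprime case: if a prime $p$ divides both $a$ and $n$, then reducing the congruence modulo $p$ gives $0 \equiv -1 \pmod p$, forcing $p \mid 1$, which is absurd; hence necessarily $\gcd(a,n)=1$. The strategy is then to extract, for every prime $p \mid n$, an arithmetic constraint on $p-1$ coming from the congruence read modulo $p$, and to show that these local constraints are jointly incompatible with $n$ being a product of such primes.

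The key local computation runs as follows. Since $n$ is odd, $n-1$ is even; write $\nu = v_2(n-1) \ge 1$ and $n-1 = 2^{\nu} t$ with $t$ odd. Fix a prime $p \mid n$ and reduce $a^{n-1} \equiv -1$ modulo $p$, working in the cyclic group $U(\mathbb{Z}_p)$, which has even order $p-1$. Let $d = \ord_p(a)$. From $a^{2(n-1)} \equiv 1 \pmod p$ we get $d \mid 2(n-1)$, while $a^{n-1} \equiv -1 \not\equiv 1 \pmod p$ gives $d \nmid (n-1)$. Writing $d = 2^{b} c$ with $c$ odd, these two divisibilities force $b = \nu+1$, so that $v_2(d) = \nu+1$. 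Since $d \mid p-1$, this yields $v_2(p-1) \ge \nu+1$, that is, $p \equiv 1 \pmod{2^{\nu+1}}$.

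Finally I would assemble the global contradiction. The congruence $p \equiv 1 \pmod{2^{\nu+1}}$ holds for \emph{every} prime divisor $p$ of $n$; since $n$ is a product, with multiplicity, of such primes, and a product of integers each $\equiv 1 \pmod{2^{\nu+1}}$ is again $\equiv 1 \pmod{2^{\nu+1}}$, we obtain $n \equiv 1 \pmod{2^{\nu+1}}$, hence $2^{\nu+1} \mid n-1$. This contradicts $v_2(n-1) = \nu$, and the contradiction completes the proof. I expect the main obstacle to be the careful bookkeeping of $2$-adic valuations in the local step: specifically, justifying that $d \mid 2(n-1)$ together with $d \nmid (n-1)$ pins down $v_2(d)$ \emph{exactly} to $\nu+1$, rather than merely bounding it. Everything else, including the passage from the local congruences to $n \equiv 1 \pmod{2^{\nu+1}}$, is routine.
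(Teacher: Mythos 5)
Your proof is correct, and it takes a genuinely cleaner route than the paper's, though both are $2$-adic valuation arguments ending in the same contradiction (a product of primes congruent to $1$ modulo a power of $2$ is again congruent to $1$, which forces $v_2(n-1)$ to be large). The paper singles out the prime $p_1$ minimizing $v_2(p_i-1)$, takes a primitive root $g$ of $U(\mathbb{Z}_{p_1^{\alpha_1}})$, writes $a \equiv g^h \pmod{p_1^{\alpha_1}}$, and from $\frac{\phi(p_1^{\alpha_1})}{2} \mid h(n-1)$ but $\phi(p_1^{\alpha_1}) \nmid h(n-1)$ deduces $v_2(p_1-1) > v_2(n-1)$; the minimality of $v_2(p_1-1)$ then gives $p_i \equiv 1 \pmod{2^{v_2(p_1-1)}}$ for all $i$, hence $v_2(n-1) \geq v_2(p_1-1)$, a contradiction. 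You instead work modulo each prime $p \mid n$ with the order $d = \ord_p(a)$: from $d \mid 2(n-1)$ and $d \nmid n-1$ you pin down $v_2(d) = \nu+1$ (your bookkeeping here is sound --- the odd part of $d$ divides the odd part of $2(n-1)$, so failure of $d \mid n-1$ must come from the $2$-part --- and in fact only the lower bound $v_2(d) \geq \nu+1$ is needed), so $d \mid p-1$ gives $p \equiv 1 \pmod{2^{\nu+1}}$ for \emph{every} prime divisor, contradicting $v_2(n-1) = \nu$. Your variant buys two simplifications: it avoids invoking the existence of primitive roots modulo odd prime powers (you use only orders, Fermat and Lagrange), and it dispenses with the preliminary choice of a distinguished prime, since the local constraint comes out uniformly at every prime divisor. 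You also make the reduction to $\gcd(a,n)=1$ explicit, where the paper merely asserts it as a harmless assumption. The paper's approach, conversely, gets by with examining a single prime power rather than all prime divisors.
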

\begin{proof}
Let $n=p_1^{\alpha_1}  \cdots  p_r^{\alpha_r}$, $p_i$ distinct prime numbers. Without loss of generality we can suppose that $\mathrm{gcd}(a,n)=1$
and that $v:=v_2(p_1-1) \leq v_2(p_i-1)$ for all $1 \leq i \leq r$ ($v_2$ being the dyadic valuation).
By contradiction, suppose $a^{n-1} \equiv -1 \mod n$. Then, in particular, $a^{n-1} \equiv -1 \mod p_1^{\alpha_1}$. Let $g$ be a generator for $U(\mathbb{Z}_{p_1^{\alpha_1}})$ 
and let $h$ be such that $g^h \equiv a \mod p_1^{\alpha_1}$. Then $g^{h(n-1)} \equiv -1 \mod p_1^{\alpha_1}$, that is, $$\frac{\phi(p_1^{\alpha_1})}{2} \mid h(n-1) \phantom{aaa}\textrm{    but    } \phantom{aaa} \phi(p_1^{\alpha_1}) \nmid h(n-1).$$ 
Hence there exists $k \in \mathbb{Z}$, $k$ odd, such that $\phi(p_1^{\alpha_1}) k = 2h (n-1)$, that is, $p_1^{\alpha_1-1}(p_1-1) k = 2h(n-1)$. It follows that $v=v_2(p_1-1) > v_2(n-1)$; however this is not possible since $p_i \equiv 1 \mod 2^v$ for every $ 1\leq i \leq r$ and thus $n-1 \equiv 0 \mod 2^v$.
\end{proof}

\begin{lemma} \label{carmichael} Let $n$ be an odd composite number and let $B:=\{a \in U(\mathbb{Z}_n) \mid a^{\frac{n-1}{2}} \equiv \pm 1 \mod n\}$. If $|B| \geq \phi(n)/2$, then $n$ is a Carmichael number.
\end{lemma}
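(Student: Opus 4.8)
The plan is to exploit the fact that $B$ is a subgroup of $U(\mathbb{Z}_n)$ of index at most $2$, and then to square away the ambiguity in the sign. First I would check that $B$ is indeed a subgroup: it contains $1$, and if $a^{\frac{n-1}{2}}\equiv\pm1$ and $b^{\frac{n-1}{2}}\equiv\pm1 \mod n$, then $(ab)^{\frac{n-1}{2}}=a^{\frac{n-1}{2}}b^{\frac{n-1}{2}}\equiv\pm1 \mod n$ and likewise $(a^{-1})^{\frac{n-1}{2}}\equiv\pm1 \mod n$. Hence $B\le U(\mathbb{Z}_n)$.

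By Lagrange's theorem $|B|$ divides $\phi(n)$, so the hypothesis $|B|\ge\phi(n)/2$ forces the index $[U(\mathbb{Z}_n):B]$ to be $1$ or $2$. In both cases the quotient $U(\mathbb{Z}_n)/B$ is annihilated by squaring, and therefore $a^2\in B$ for every $a\in U(\mathbb{Z}_n)$. This is the decisive reduction: it upgrades control over the $\ge\phi(n)/2$ elements of $B$ into control over \emph{all} admissible bases.

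The key step is then to evaluate $a^{n-1}$ for an arbitrary $a\in U(\mathbb{Z}_n)$. Since $a^2\in B$, I can write $a^{n-1}=(a^2)^{\frac{n-1}{2}}\equiv\pm1 \mod n$; and now Lemma \ref{nomenouno} rules out the value $-1$, so in fact $a^{n-1}\equiv1 \mod n$. As $a$ is arbitrary, this is exactly the statement that $n$ is a Carmichael number.

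I do not expect a serious obstacle: the argument rests entirely on recognizing that $B$ is a subgroup (so that its size pins down the index) combined with the sign restriction of Lemma \ref{nomenouno}. The one subtlety to respect is the logical role of the two facts—it would be tempting but insufficient to conclude $a^{n-1}\equiv1 \mod n$ directly from $a\in B$, since that only governs half the bases. Routing the computation through $a^2\in B$ is precisely what makes the conclusion hold for every element of $U(\mathbb{Z}_n)$.
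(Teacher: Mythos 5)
Your proof is correct, and it is organized differently from the paper's. The paper never uses that $B$ itself is a subgroup: it works with the smaller subgroup $B':=\{a \in U(\mathbb{Z}_n) \mid a^{\frac{n-1}{2}} \equiv 1 \mod n\}$, notes that $B$ is a union of at most two cosets of $B'$, and splits into cases according to whether $B'$ has index $2$ or index $4$ in $U(\mathbb{Z}_n)$; Lemma \ref{nomenouno} is invoked only in the index-$4$ case, to show that the square of an element outside $B$ cannot land in $B \setminus B'$. You instead verify that $B$ is a subgroup, use Lagrange to force its index to be $1$ or $2$, and then run one uniform computation: $a^2 \in B$ for every $a \in U(\mathbb{Z}_n)$, hence $a^{n-1}=(a^2)^{\frac{n-1}{2}} \equiv \pm 1 \mod n$, and Lemma \ref{nomenouno} eliminates $-1$. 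Your route is shorter, needs no case distinction, and absorbs the trivial case $|B|=\phi(n)$ into the same argument rather than setting it aside. What the paper's finer decomposition buys is the scaffolding for Remark \ref{troppi} --- the observation that the index-$4$ case never actually occurs --- which is cited again in the proof of Proposition \ref{caratterizzazione}; your streamlined argument proves the lemma just as well but does not by itself provide that refinement.
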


\begin{proof}
If $|B|=\phi(n)$ the statement is trivial, therefore from now on we can suppose that $|B|=\phi(n)/{2}$. Let $B':= \{a \in U(\mathbb{Z}_n) \mid a^{\frac{n-1}{2}} \equiv  +1 \mod n\}$. It is easily seen that two cases can occur: either $B' = B$, that is, $B'$ has index $2$  in $U(\mathbb{Z}_n)$, or  $C:=B \setminus B' \not = \emptyset$ is a coset of $B'$ in $U(\mathbb{Z}_n)$, that is, $B'$ has index $4$. In the first case, for any $h \not \in B'$ we have $h^2 \in B'$, that is, $h^{n-1}={h^2}^\frac{n-1}{2} \equiv 1 \mod n$.  
In the second case we can conclude by observing that again $U(\mathbb{Z}_n)/B' $ has elements of order at most $2$. Indeed $C$ has order two in $U(\mathbb{Z}_n)/B' $, therefore if $h \not \in B$, then $h^2$ must be in $B'$ or in $C$, but the latter is not possible because otherwise $h^{n-1} = {h^2}^{\frac{n-1}{2}} \equiv -1 \mod n$, contradicting Lemma \ref{nomenouno}. 
\end{proof}

\begin{remark} \label{troppi}
Notice that in the proof of Lemma \ref{carmichael} the second case ($C \not = \emptyset$, $B'$ has index $4$) does not actually occur. In fact, since we have proved that $n$ is a Carmichael number, we now know that $n=p_1 \cdots p_r$ with $r \geq 3$, $p_i$ distinct primes (see, for example, \cite[Proposition V.1.2, Proposition V.1.3]{K}). For every $ 1 \leq i \leq r$, let $g_i$ be a generator of $U(\mathbb{Z}_{p_i})$. Since $C \not = \emptyset$, there exists $b \in U(\mathbb{Z}_n)$ such that $b^{\frac{n-1}{2}} \equiv -1 \mod n$, that is, $b^{\frac{n-1}{2}} \not \equiv 1 \mod p_i$ for every $i$. In particular $g_i^{\frac{n-1}{2}} \not \equiv 1 \mod p_i $ for every $i$. For every $1 \leq i \leq r$, let $x_i$ be the solution $\mod n$ of the system $X \equiv g_i \mod p_i, X \equiv 1 \mod n/p_i$. Notice that for every $i$, $x_i^{\frac{n-1}{2}} \not \equiv \pm 1 \mod n$ and that for any $i \not = j$, $x_i^{\frac{n-1}{2}} \not \equiv x_j^{\frac{n-1}{2}} \mod n$. Therefore, since $r \geq 3$, $B'$ must have index $\geq 5$.
\end{remark}

\begin{lemma} \label{jacobi}
Let $n>2$ be an odd number. Let  $$P_n:=\left\{a \in U(\mathbb{Z}_n) \mid \left( \frac{a}{n} \right) =1 \right\},$$ $$N_n:=\left\{a \in U(\mathbb{Z}_n) \mid \left( \frac{a}{n} \right) =-1 \right\}.$$
If $n$ is not a perfect square then $|P_n|=|N_n|=\phi(n)/2$.
\end{lemma}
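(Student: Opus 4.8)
The plan is to exploit the fact that the Jacobi symbol $a \mapsto \left( \frac{a}{n} \right)$ defines a group homomorphism $\chi \colon U(\mathbb{Z}_n) \to \{\pm 1\}$, since it is completely multiplicative in its numerator. From this viewpoint $P_n = \ker \chi$ is a subgroup of $U(\mathbb{Z}_n)$ and $N_n$ is its complement, so that $|P_n| + |N_n| = \phi(n)$. Consequently it suffices to prove that $\chi$ is surjective, i.e.\ that there is at least one $a \in U(\mathbb{Z}_n)$ with $\left( \frac{a}{n} \right) = -1$: once this is known, $\ker \chi$ has index $2$ in $U(\mathbb{Z}_n)$, forcing $|P_n| = \phi(n)/2$ and hence $|N_n| = \phi(n) - \phi(n)/2 = \phi(n)/2$ as well.

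It remains to produce such a witness $a$, and this is exactly where the hypothesis that $n$ is not a perfect square enters. Write $n = p_1^{\alpha_1} \cdots p_r^{\alpha_r}$ with the $p_i$ distinct odd primes; since $n$ is not a perfect square, at least one exponent is odd, and after relabelling we may assume $\alpha_1$ is odd. The idea is to build an $a$ that ``sees'' a quadratic non-residue at $p_1$ but is invisible at the remaining primes. Concretely, by the Chinese Remainder Theorem I would choose $a \in U(\mathbb{Z}_n)$ with $a \equiv b \mod p_1$, where $b$ is any quadratic non-residue modulo $p_1$ (such a $b$ exists because $p_1$ is odd), and $a \equiv 1 \mod p_i$ for every $i \geq 2$; this $a$ is automatically coprime with $n$.

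For this $a$ the definition of the Jacobi symbol gives $\left( \frac{a}{n} \right) = \prod_{i=1}^r \left( \frac{a}{p_i} \right)^{\alpha_i}$. Each factor with $i \geq 2$ equals $\left( \frac{1}{p_i} \right)^{\alpha_i} = 1$, while the first factor is $\left( \frac{b}{p_1} \right)^{\alpha_1} = (-1)^{\alpha_1} = -1$, because $b$ is a non-residue and $\alpha_1$ is odd. Hence $\left( \frac{a}{n} \right) = -1$, which establishes the surjectivity of $\chi$ and completes the argument. I do not foresee a serious obstacle: the only point requiring the hypothesis is the existence of an odd exponent, and the only mild subtlety is to keep track of the Jacobi symbol as a product over the prime factors taken with multiplicity, so that a single odd exponent suffices to make the whole symbol $-1$.
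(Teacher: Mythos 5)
Your proposal is correct and follows essentially the same route as the paper: both reduce the claim to exhibiting a single element of $N_n$ (you via surjectivity of the homomorphism $a \mapsto \left( \frac{a}{n} \right)$ with kernel $P_n$, the paper via $N_n$ being a coset of the subgroup $P_n$), and both construct that element by the Chinese Remainder Theorem as a quadratic non-residue modulo a prime $p_1$ appearing with odd exponent and $\equiv 1$ modulo the remaining primes. The computation $\left( \frac{a}{n} \right) = (-1)^{\alpha_1} = -1$ is identical in the two arguments.
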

\begin{proof}
Notice that we need only to prove that if $n$ is not a perfect square, then $N_n \not = \emptyset$, since in this case $N_n$ is just a coset of the subgroup $P_n$ in $U(\mathbb{Z}_n)$.
Let $n=p_1^{\alpha_1}  \cdots  p_r^{\alpha_r}$,  $p_i$ distinct primes. Without loss of generality we can suppose that  $\alpha_1$ is odd.
Choose $q \in U(\mathbb{Z}_{p_1})$ such that $q$ is not a quadratic residue $\mod p_1$. Let $x$ be any solution of 
\begin{displaymath}
\left\{ 
\begin{array}{l}
  X \equiv q \ \mod p_1\\
 X \equiv 1 \ \mod p_2  \cdots  p_r 
\end{array}
\right.
\end{displaymath}

Clearly $\text{gcd}(x,n)=1$. Moreover   $$\left( \frac{x}{n} \right) = \left( \frac{x}{p_1} \right)^{\alpha_1}  \cdots  \left( \frac{x}{p_r} \right)^{\alpha_r}=(-1)^{\alpha_1} =-1,$$ that is, $N_n \not = \emptyset$.
\end{proof}

\section{special Carmichael numbers}

\begin{definition}
Let $n$ be an odd composite number. We say that $n$ is a \emph{special Carmichael number} if $a^{\frac{n-1}{2}} \equiv 1 \mod n$ for all $a \in U(\mathbb{Z}_n)$.
\end{definition}

Following Korselt, we have the characterization below:
\begin{proposition} \label{korselt}
$n$ is a special Carmichael number if and only if $n$ is odd, square-free and $(p-1) \mid \frac{n-1}{2}$ for every prime $p$ such that $p \mid n$.
\end{proposition}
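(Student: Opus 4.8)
The plan is to follow the classical derivation of Korselt's criterion, simply replacing the exponent $n-1$ by $\frac{n-1}{2}$ at every step. Writing $n=p_1^{\alpha_1}\cdots p_r^{\alpha_r}$ with distinct odd primes $p_i$, I would first use the Chinese Remainder Theorem to split $U(\mathbb{Z}_n)\cong\prod_i U(\mathbb{Z}_{p_i^{\alpha_i}})$, so that the defining property of a special Carmichael number becomes the requirement that $a^{\frac{n-1}{2}}\equiv 1 \mod p_i^{\alpha_i}$ hold for every unit $a$ modulo each $p_i^{\alpha_i}$. Because $p_i$ is odd, each factor group is cyclic; choosing a generator $g_i$ collapses the $i$-th requirement to the single congruence $g_i^{\frac{n-1}{2}}\equiv 1 \mod p_i^{\alpha_i}$, which is equivalent to the divisibility $\phi(p_i^{\alpha_i})=p_i^{\alpha_i-1}(p_i-1)\mid \frac{n-1}{2}$.

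Granting this reformulation, the forward direction is immediate: from $p_i^{\alpha_i-1}(p_i-1)\mid\frac{n-1}{2}$ I would isolate the prime-power factor $p_i^{\alpha_i-1}$, note that it divides $n-1$, and invoke $\gcd(p_i,n-1)=1$ (valid since $p_i\mid n$) to force $\alpha_i=1$. This gives square-freeness, after which the surviving divisibility reads exactly $(p_i-1)\mid\frac{n-1}{2}$; that $n$ is odd is part of the hypothesis.

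Conversely, starting from a square-free $n=p_1\cdots p_r$ with $(p_i-1)\mid\frac{n-1}{2}$ for all $i$, I would argue modulo each prime separately: any unit $a$ has order dividing $p_i-1$ in $U(\mathbb{Z}_{p_i})$, hence $a^{\frac{n-1}{2}}\equiv 1\mod p_i$, and reassembling via the Chinese Remainder Theorem yields $a^{\frac{n-1}{2}}\equiv 1\mod n$ for every $a\in U(\mathbb{Z}_n)$.

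I do not expect any genuine obstacle here; the argument is a routine repackaging of primitive roots and the Chinese Remainder Theorem. The one step deserving a moment's attention is the deduction of square-freeness: it hinges on the coprimality of $p_i$ with $n-1$, which is precisely what rules out higher prime-power factors, and it is also where the hypothesis that $n$ is odd (so that $\frac{n-1}{2}$ is an integer at all) is implicitly used.
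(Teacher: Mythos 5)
Your proof is correct and is exactly what the paper intends: the paper's proof consists of citing Korselt's classical argument and noting that ``a minor modification'' (replacing the exponent $n-1$ by $\frac{n-1}{2}$) suffices, and your write-up simply carries out that modification in full, via the Chinese Remainder Theorem, cyclicity of $U(\mathbb{Z}_{p^{\alpha}})$ for odd $p$, and the coprimality of $p$ with $n-1$ to force square-freeness. No discrepancy to report.
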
 
\begin{proof}
Just a minor modification of Korselt's proof is needed (see, for example, \cite[Proposition V.I.2]{K}).
\end{proof}

\begin{remark}
It is clear that special Carmichael numbers are Carmichael numbers. As explained in \cite[Exercise 3.24]{CP} the proof of the infinitude of Carmichael numbers (see \cite{AGP}) actually implies that there are infinitely many special Carmichael numbers. The least example is the famous ``taxicab'' number $1729$, dear to Hardy and Ramanujan.  The first elements of the sequence of special Carmichael numbers are $1729, 2465, 15841, 41041,$ 46657$,$ 75361$,$ 162401,$ $ $172081, $ $399001, 449065, 488881, \ldots $. It is also clear from Proposition \ref{korselt} that every special Carmichael number must be $\equiv 1 \mod 4$. 
\end{remark}

We are now ready to prove our main result.

\begin{proposition} \label{caratterizzazione}
Let $n$ be an odd composite number. Then $n$ is an Euler pseudoprime for exactly one half of the bases in $U(\mathbb{Z}_n)$ if and only if $n$ is a special Carmichael number. 
\end{proposition}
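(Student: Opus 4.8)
The plan is to realise the set of Euler liars as the kernel of a single group homomorphism and to reduce the counting to a dimension count over $\mathbb{F}_2$. Write $E:=\{a\in U(\mathbb{Z}_n)\mid a^{\frac{n-1}{2}}\equiv\left(\frac{a}{n}\right)\bmod n\}$ for the set of bases making $n$ an Euler pseudoprime, and set $\chi(a):=\left(\frac{a}{n}\right)$ and $\psi(a):=a^{\frac{n-1}{2}}\bmod n$. The starting observation is that $a\in E$ exactly when $\psi(a)\chi(a)\equiv1\bmod n$ (since $\chi(a)=\pm1$), so, putting $\theta(a):=\psi(a)\chi(a)$, I would note that $\theta$ is a homomorphism of the abelian group $U(\mathbb{Z}_n)$ with $E=\ker\theta$. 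Hence $|E|=\phi(n)/|\mathrm{Im}\,\theta|$, and the condition $|E|=\phi(n)/2$ translates into the single clean statement $|\mathrm{Im}\,\theta|=2$.

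For the easy implication, if $n$ is a special Carmichael number then $\psi\equiv1$, so $\theta=\chi$. By Proposition~\ref{korselt} the number $n$ is square-free, hence not a perfect square, so Lemma~\ref{jacobi} shows $\chi$ is onto $\{\pm1\}$; thus $|\mathrm{Im}\,\theta|=2$ and $|E|=\phi(n)/2$.

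For the converse, assume $|E|=\phi(n)/2$. Every Euler liar satisfies $a^{\frac{n-1}{2}}\equiv\pm1$, so $E\subseteq B$ and therefore $|B|\ge\phi(n)/2$; Lemma~\ref{carmichael} then forces $n=p_1\cdots p_r$ to be a Carmichael number, in particular square-free with $r\ge3$. Here the Carmichael property is decisive: it gives $\psi(a)^2=a^{n-1}\equiv1$, so $\theta$ takes values in the $2$-torsion subgroup $S=\{x\in U(\mathbb{Z}_n):x^2=1\}$, which by the Chinese Remainder Theorem is $\cong(\mathbb{Z}/2\mathbb{Z})^r$. Identifying $S$ with $\mathbb{F}_2^r$ (the element that is $\equiv-1\bmod p_i$ and $\equiv1\bmod p_j$ for $j\neq i$ becoming the standard vector $\mathbf{e}_i$, and $-1$ becoming $\mathbf{1}=\sum_i\mathbf{e}_i$), I would evaluate $\theta$ on the CRT generators $x_i$ given by $x_i\equiv g_i\bmod p_i$, $x_i\equiv1\bmod p_j$ for $j\neq i$, where $g_i$ is a primitive root mod $p_i$. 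A direct computation gives $\chi(x_i)=-1$ for every $i$ (the Jacobi symbol collects a single $-1$ from the $p_i$-component), while $\psi(x_i)$ equals $\mathbf{e}_i$ or $\mathbf{0}$ according as $g_i^{\frac{n-1}{2}}\equiv-1$ or $\equiv1\bmod p_i$. Hence in $\mathbb{F}_2^r$ one gets $\theta(x_i)=\mathbf{1}+\delta_i\mathbf{e}_i$, where $\delta_i\in\{0,1\}$ records whether $x_i^{\frac{n-1}{2}}\not\equiv1\bmod n$.

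Since the $x_i$ generate $U(\mathbb{Z}_n)$, the image $\mathrm{Im}\,\theta$ is the $\mathbb{F}_2$-span of the vectors $\mathbf{1}+\delta_i\mathbf{e}_i$, and $|\mathrm{Im}\,\theta|=2$ means this span is $1$-dimensional. The concluding step is a short linear-algebra check: if some $\delta_i=1$ then the span already contains two distinct nonzero vectors (namely $\mathbf{1}$ and $\mathbf{e}_i$ when exactly one $\delta$ is nonzero, or $\mathbf{1}+\mathbf{e}_i$ and $\mathbf{1}+\mathbf{e}_j$ when several are), and over $\mathbb{F}_2$ two distinct nonzero vectors are automatically independent, forcing $\dim\ge2$ and $|\mathrm{Im}\,\theta|\ge4$, contradicting $|E|=\phi(n)/2$. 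Therefore all $\delta_i=0$, i.e.\ $x_i^{\frac{n-1}{2}}\equiv1\bmod n$ for every $i$; as the $x_i$ generate $U(\mathbb{Z}_n)$, this yields $\psi\equiv1$, so $n$ is a special Carmichael number. I expect the real obstacle to be this converse, and within it precisely the point where one must guarantee that $\theta$ lands in $S$ — which is exactly where the Carmichael conclusion of Lemma~\ref{carmichael} is indispensable — together with the correct bookkeeping of the values $\theta(x_i)$; once these are in place, the $\mathbb{F}_2$ dimension count makes the dichotomy transparent.
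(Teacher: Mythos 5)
Your proof is correct, and in the converse direction it follows a genuinely different route from the paper's. Both arguments open the same way: every Euler liar satisfies $a^{\frac{n-1}{2}}\equiv\pm1\bmod n$, so Lemma \ref{carmichael} forces $n=p_1\cdots p_r$ to be a Carmichael number, and both ultimately evaluate Jacobi symbols on CRT elements built from primitive roots. But from there the paper leans on an external dichotomy --- by \cite[Exercise 3.24]{CP} together with Remark \ref{troppi}, either $a^{\frac{n-1}{2}}\equiv1$ for all $a$, or for exactly half of the bases while the other half is $\not\equiv\pm1$ --- then pins down every Jacobi symbol by counting against Lemma \ref{jacobi}, and kills the bad case with the single witness $x\equiv g\bmod p_1$, $x\equiv g'\bmod p_2$, $x\equiv1\bmod p_3\cdots p_r$, whose symbol is $+1$ although $x^{\frac{n-1}{2}}\not\equiv1$. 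You instead encode the whole problem in the homomorphism $\theta=\psi\chi$: the liars form $\ker\theta$, the Carmichael property places $\mathrm{Im}\,\theta$ inside the $2$-torsion subgroup $S\cong\mathbb{F}_2^r$, and ``exactly half'' becomes $\dim_{\mathbb{F}_2}\mathrm{Im}\,\theta=1$, which your formula $\theta(x_i)=\mathbf{1}+\delta_i\mathbf{e}_i$ and the observation that two distinct nonzero vectors over $\mathbb{F}_2$ are automatically independent then refute unless every $\delta_i=0$. All the small steps check out: $\theta$ lands in $S$ precisely because $n$ is Carmichael, the $x_i$ do generate $U(\mathbb{Z}_n)$ by CRT, and your case analysis only needs $r\ge2$. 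What your route buys is self-containedness and structure: it needs neither \cite[Exercise 3.24]{CP}, nor Remark \ref{troppi}, nor even Lemma \ref{jacobi} in the converse, and it makes Solovay--Strassen's subgroup observation do the real work by exhibiting $\mathrm{Im}\,\theta$ explicitly. What the paper's route buys is brevity on the page (the dichotomy is outsourced to the literature) and a single concrete violating base rather than a dimension count.
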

\begin{proof}
If $n$ is a special Carmichael number then, in particular, $n$ is a Carmichael number and thus $n$ is square-free. Therefore, by Lemma \ref{jacobi}, $n$ is an Euler pseudoprime for half of the bases in $U(\mathbb{Z}_n)$, namely for all $a \in U(\mathbb{Z}_n)$ such that $\left( \frac{a}{n}\right) = 1$.

Conversely, suppose that $n$ is an Euler pseudoprime for exactly one half of the bases in $U(\mathbb{Z}_n)$. Then by hypothesis $a^\frac{n-1}{2} \equiv \pm 1 \mod n$ for at least one half of the admissible bases and therefore, in particular, $n$ is a Carmichael number by Lemma \ref{carmichael}, thus $n=p_1 \cdots p_r$, $p_i$ distinct primes, $r \geq 3$.
By \cite[Exercise 3.24]{CP} and Remark \ref{troppi}, either $a^{\frac{n-1}{2}} \equiv 1 \mod n$ for every $a \in U(\mathbb{Z}_n)$ or $a^\frac{n-1}{2} \equiv  1 \mod n$ for exactly one half of the admissible bases (while $a^{\frac{n-1}{2}} \not \equiv \pm 1 \mod n$ for the other half). We must rule out the latter case. 


By hypothesis and by Lemma \ref{jacobi}, 
$\left( \frac{a}{n}\right) = 1$ for all $a \in U(\mathbb{Z}_n)$ such that $a^{\frac{n-1}{2}} \equiv 1 \mod n$, while $\left( \frac{a}{n}\right) = -1$ for all $a \in U(\mathbb{Z}_n)$ such that $a^{\frac{n-1}{2}} \not \equiv 1 \mod n$. We will now exhibit $x \in U(\mathbb{Z}_n)$ such that $x^{\frac{n-1}{2}} \not \equiv 1$ but $\left( \frac{x}{n}\right)=1$, a contradiction.

Let $b \in U(\mathbb{Z}_n)$ such that $b^{\frac{n-1}{2}} \not \equiv 1$. In particular, there exists a prime factor $p$ of $n$, say $p_1$, such that $b^{\frac{n-1}{2}} \not \equiv 1 \mod p_1$. Let $g$ be a generator of $U(\mathbb{Z}_{p_1})$, so that $g^{\frac{n-1}{2}} \not \equiv 1 \mod p_1$. Let $g'$ be a generator of $U(\mathbb{Z}_{p_2})$. Let $x$ be the unique solution $\mod n$ of the system
\begin{displaymath}
\left\{ 
\begin{array}{l}
  X \equiv g \ \mod p_1\\
 X \equiv g' \ \mod p_2 \\
 X \equiv 1 \ \mod p_3   \cdots  p_r 
\end{array}
\right.
\end{displaymath}
We see immediately that $x^{\frac{n-1}{2}} \not \equiv 1 \mod n$ 
and $$\left( \frac{x}{n} \right) = \prod_{i=1}^r \left( \frac{x}{p_i} \right)=(-1)(-1)=1.$$

\begin{remark} \label{monier}
As Pomerance kindly pointed out to the author, Proposition \ref{caratterizzazione} can also be proved by a careful  consideration of all the cases in Monier's formula for the number of liars to the Solovay--Strassen test (see \cite[Proposition 3]{M} and \cite{EP}). Actually Monier, in \cite{M}, also observes that odd composite numbers achieving the bound in (\ref{disbase}) are Carmichael numbers, but he does not make calculations explicit and misses to give a complete characterization (although he was probably aware of the gist of Proposition \ref{caratterizzazione}).
It is also worth remarking that Monier, in \cite{M}, additionally gives a formula for the number of liars to the Miller--Rabin test. This formula can be used to give a complete characterization of odd composite numbers $n$ achieving the bound $\phi(n)/4$ for strong pseudoprimes: see \cite{M} and \cite[Equation 1.5 and \S 5]{ZT}.
\end{remark}

\end{proof}

\bibliographystyle{plain}  
\bibliography{pseudoprimes}

\begin{thebibliography}{1}

\bibitem{AGP}
W.~R. Alford, Andrew Granville, and Carl Pomerance.
\newblock There are infinitely many {C}armichael numbers.
\newblock {\em Ann. of Math. (2)}, 139(3):703--722, 1994.

\bibitem{CP}
Richard Crandall and Carl Pomerance.
\newblock {\em Prime numbers. A computational perspective}.
\newblock Springer, New York, second edition, 2005.

\bibitem{EP}
Paul Erd{\H{o}}s and Carl Pomerance.
\newblock On the number of false witnesses for a composite number.
\newblock In {\em Number theory ({N}ew {Y}ork, 1984--1985)}, volume 1240 of
  {\em Lecture Notes in Math.}, pages 97--100. Springer, Berlin, 1987.

\bibitem{K}
Neal Koblitz.
\newblock {\em A course in number theory and cryptography}, volume 114 of {\em
  Graduate Texts in Mathematics}.
\newblock Springer-Verlag, New York, second edition, 1994.

\bibitem{L}
D.~H. Lehmer.
\newblock Strong {C}armichael numbers.
\newblock {\em J. Austral. Math. Soc. Ser. A}, 21(4):508--510, 1976.

\bibitem{M}
Louis Monier.
\newblock Evaluation and comparison of two efficient probabilistic primality
  testing algorithms.
\newblock {\em Theoret. Comput. Sci.}, 12(1):97--108, 1980.

\bibitem{PSW}
Carl Pomerance, J.~L. Selfridge, and Samuel~S. Wagstaff, Jr.
\newblock The pseudoprimes to {$25\cdot 10^{9}$}.
\newblock {\em Math. Comp.}, 35(151):1003--1026, 1980.

\bibitem{SS}
R.~Solovay and V.~Strassen.
\newblock A fast {M}onte-{C}arlo test for primality.
\newblock {\em SIAM J. Comput.}, 6(1):84--85, 1977.

\bibitem{ZT}
Zhenxiang Zhang and Min Tang.
\newblock Finding strong pseudoprimes to several bases. {II}.
\newblock {\em Math. Comp.}, 72(244):2085--2097 (electronic), 2003.

\end{thebibliography}

\end{document}